\documentclass[twoside, 12pt]{article}
\usepackage{mathrsfs}
\usepackage{amssymb, amsmath, mathrsfs, amsthm}%, wasysym}
\usepackage{graphicx}
\usepackage{color}
\usepackage{tikz}
\usepackage[top=2cm, bottom=2cm, left=2cm, right=2cm]{geometry}
\usepackage{float, caption, subcaption}
\usepackage{diagbox}
\usepackage{algorithm}
\usepackage{booktabs}
\usepackage{algpseudocode}
\usepackage{amsmath}
\usepackage[colorlinks,
  linkcolor=blue,
 anchorcolor=blue,
 citecolor=blue]{hyperref} % 棰滆壊鍙皟
\input{epsf}

\newcommand{\bd}{\begin{description}}
\newcommand{\ed}{\end{description}}
\newcommand{\bi}{\begin{itemize}}
\newcommand{\ei}{\end{itemize}}
\newcommand{\be}{\begin{enumerate}}
\newcommand{\ee}{\end{enumerate}}
\newcommand{\beq}{\begin{equation}}
\newcommand{\eeq}{\end{equation}}
\newcommand{\beqs}{\begin{eqnarray*}}
\newcommand{\eeqs}{\end{eqnarray*}}

\definecolor{DarkGreen}{rgb}{0.2, 0.6, 0.3}

\usepackage{multicol}
\usepackage{amsthm}
\usepackage{multirow}
\usepackage{makecell}

\newtheorem{theorem}{Theorem}
\newtheorem{conjecture}{Conjecture}

\newtheorem{definition}{Definition}[section]
\newtheorem{corollary}[theorem]{Corollary}
\newtheorem{case}{Case}

\newtheorem{proposition}{Proposition}[section]

\begin{document}
\title{\textbf{On two conjectures for generalized off-diagonal Schur numbers}
\footnote{Supported by the National Science Foundation of China
(Nos. 12471329 and 12061059).}
}
\author{Yanyan Song\footnote{School of Mathematics and Statistis, Qinghai Normal University, Xining, Qinghai 810008, China. {\tt songyanyan@hrbeu.edu.cn}}, \ \ Yaping Mao\footnote{Academy of Plateau
Science and Sustainability, and School of Mathematics and Statistics, Qinghai Normal University, Xining, Qinghai 810008, China. {\tt yapingmao@outlook.com; myp@qhnu.edu.cn}}
}

\date{}
\maketitle
\begin{abstract}
For an integer $t \geq 3$, let $\mathcal{L}(t)$ denote the linear equation
$x_1 + x_2 + \cdots + x_{t-1} = x_t,$
where all variables are positive integers. For integers $k \geq 1$ and $t_0,t_1,\dots,t_{k-1} \geq 3$, the generalized Schur number $S(k;t_0,t_1,\dots,t_{k-1})$ is the least positive integer $N$ such that every $k$-coloring of $[1,N]$, for some $i \in \{0,1,\dots,k-1\}$, a solution to $\mathcal{L}(t_i)$ with all variables monochromatic in color $i$.
In 2015,
Ahmed and Schaal proposed a conjecture:
$S(3 ; 3, t, u)>3 t u-t u-u-1$ for $3=t<u$ and $3<t \leq u$. In this paper, we confirm this conjecture. At the same paper, they also conjecture that
$S(3 ; s, t, u)=s t u-t u-u-1$ for $4 \leq s \leq t \leq u$. Motivated by the second conjecture, we give a recursive lower bound of
$S(r; k_0, k_1, \dots, k_{r-1})$ and upper bounds for $S(r; k_0, k_1, \dots, k_{r-1})$ and $S(r;k_0,\dots,k_{r-2},u)$ for all sufficiently large $u$.
\\[2mm]
{\bf Keywords:} Schur number; 2-coloring; Monochromatic solution; Ramsey theory; Upper bound estimation\\[2mm]
{\bf AMS subject classification 2020:} 05D10; 05C15, 05A17
\end{abstract}

\section{Introduction}

Let $\mathbb{N}$, $\mathbb{Z}$, and $\mathbb{Q}$ denote the sets of natural numbers, integers, and rational numbers, respectively. For integers $a \leq b$, we write
\[
[a,b] = \{n \in \mathbb{Z} \mid a \leq n \leq b\}.
\]
Throughout, variables such as $x,y,z,w$ (with or without subscripts) represent integer variables, while unspecified positive integer coefficients are denoted by $a,b,c,d$ (again, possibly with subscripts). We assume all coefficients are strictly positive; consequently, the equation $ax+by+cz=dw$ is considered distinct from
$ax+by=cz+dw.$

A function $\Delta: [1,n]\longrightarrow \{1,2,\ldots,r\}$ is called an \emph{$r$-coloring} of $[1,n]$, where the integers $1,2,\ldots,r$ represent $r$ distinct colors. Given an $r$-coloring $\Delta$ and a system $\mathcal{L}$ of linear equations or inequalities in $k$ variables, a solution $(x_1,x_2,\ldots,x_k)$ of $\mathcal{L}$ is called \emph{monochromatic} if
\[
\Delta(x_1)=\Delta(x_2)=\cdots=\Delta(x_k).
\]

In 1916, Schur \cite{Schur16} established the following fundamental result, which is regarded as one of the earliest cornerstones of Ramsey theory, alongside Hilbert’s “cube lemma” (see, e.g., \cite{Soifer00}).

\begin{theorem}{\upshape \cite{Schur16}}\label{th-Schur16-1}
{\upshape (Schur's Theorem, finite version)}
For every $r \in \mathbb{Z}^+$, there exists a positive integer $S(r)$ such that any $r$-coloring
\[
\Delta:[1,S(r)] \longrightarrow [r]
\]
contains a monochromatic triple $(x,y,z)$ satisfying $x+y=z.$
\end{theorem}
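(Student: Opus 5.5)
The plan is to derive Schur's Theorem from the finite Ramsey theorem for multicoloured complete graphs. Write $R_r(3)$ for the least $n$ such that every $r$-colouring of the edges of $K_n$ contains a monochromatic triangle. I will take $S(r)=R_r(3)-1$.

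\textbf{Step 1 (transfer to graphs).} Fix an $r$-colouring $\Delta$ of $[1,N]$ with $N=R_r(3)-1$. Colour the edges of the complete graph on vertex set $[1,N+1]$ by giving the edge $\{i,j\}$, with $i<j$, the colour $\Delta(j-i)$. This is well defined because $1\le j-i\le N$. By the choice of $N$ there is a monochromatic triangle $i<j<l$. Set $x=j-i$, $y=l-j$ and $z=l-i$. Then $x+y=z$, all three lie in $[1,N]$, and $\Delta(x)=\Delta(y)=\Delta(z)$. So $(x,y,z)$ is the required monochromatic triple. (If $x=y$ is forbidden in some reading, nothing changes here, since the statement allows $x=y$.)

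\textbf{Step 2 (existence of $R_r(3)$).} This is the only real content and the step I expect to be the main obstacle. I would prove by induction on $r$ that $R_1(3)=3$ and
\[
R_r(3)\le r\bigl(R_{r-1}(3)-1\bigr)+2 .
\]
Pick a vertex $v$ in a complete graph on $r(R_{r-1}(3)-1)+2$ vertices. It has $r(R_{r-1}(3)-1)+1$ incident edges. By pigeonhole, some colour $c$ appears on at least $R_{r-1}(3)$ of them, say on the edges from $v$ to a set $W$.

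\emph{Case 1: some edge inside $W$ has colour $c$.} Together with $v$ it forms a monochromatic triangle in colour $c$.

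\emph{Case 2: no edge inside $W$ has colour $c$.} Then $W$ spans a complete graph coloured with $r-1$ colours. By the induction hypothesis it contains a monochromatic triangle.

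This gives finiteness, with the explicit bound $R_r(3)\le \lfloor e\,r!\rfloor+1$.

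Combining Steps 1 and 2 yields a finite $S(r)$ with the stated property. Monotonicity in $N$ is immediate: a colouring of a longer interval restricts to $[1,S(r)]$, so the property also holds for every larger $N$ and the least such integer exists.
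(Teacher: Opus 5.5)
Your proof is correct. The paper only cites Schur for this theorem, but your Step 1 is exactly the difference-colouring argument it uses in Theorem~\ref{th:ramsey-embedding}, specialized to $k_i=3$ (the paper uses vertex set $\{0,1,\dots,N\}$ where you use $[1,N+1]$). Your Step 2 additionally proves the finiteness of $R_r(3)$, which the paper takes for granted, and your recursion correctly yields $S(r)\le\lfloor e\,r!\rfloor$, matching Schur's bound quoted in the introduction.
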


\begin{definition}[Schur Number]
For a given positive integer $n$, the \emph{Schur number} $S_2(r)$ is defined as the smallest positive integer $N$ such that any $r$-coloring of $[1,N]$ such that there ia a monochromatic solution to equation $x+y=z$,where $x \leq y$.
\end{definition}

The exact value of $S_2(4)$ was computed by Baumert \cite{Ba61}, and more recently Heule \cite{He18} determined $S_2(5)$. In addition, Fredricksen and Sweet \cite{FS00} obtained lower bounds for $S_2(6)$ and $S_2(7)$ using symmetric sum-free partitions. The currently best-known values and bounds for $S_2(r)$, $1 \leq r \leq 7$, are listed in Table~\ref{table-1}.  For all integers \( r \geq 1 \), For all integers \( r \geq 1 \), Schur  \cite{Schur16} established the following general lower and upper bounds for \( S_2(r) \), that is,
$\frac{3^r + 1}{2} \leq S_2(r) \leq \left\lfloor r! e \right\rfloor.$
In addition, he derived the recursive inequality $S_2(r) \geq 3S_2(r - 1) - 1.$

\begin{table}[h]
\centering
\caption{The first few Schur numbers $S_2(r)$.}
\begin{tabular}{c ccccccc}
\toprule
$n$ & 1 & 2 & 3 & 4 & 5 & 6 & 7 \\
\midrule
$S_2(r)$ & 2 & 5 & 14 & 45 & 161 & $\ge 537$ & $\ge 1681$ \\
\bottomrule
\end{tabular}
\label{table-1}
\end{table}

\begin{definition}[Generalized Schur Number]
For an integer $t \geq 3$, let $\mathcal{L}(t)$ denote the linear equation
$$x_1 + x_2 + \cdots + x_{t-1} = x_t,$$
where all variables are positive integers. For integers $r \geq 1$ and $k_0,k_1,\dots,k_{r-1} \geq 3$, the \emph{generalized Schur number} $S(r; k_0, k_1, \dots, k_{r-1})$ is the least positive integer $N$ such that every $k$-coloring of $[1,N]$, for some $i \in \{0,1,\dots,r-1\}$, a solution to $\mathcal{L}(t_i)$ with all variables monochromatic in color $i$.
\end{definition}

The \emph{generalized diagonal Schur number} is defined by
$S(r;k) = S(r; k_0,\dots,k_{r-1})$ with $k_i=k$ for all $0\leq i \leq r-1$.
For distinct $k_i$, $S(r; k_0,\dots,k_{r-1})$ is called
the \emph{generalized off-diagonal Schur number}.

In 1966, Znám \cite{Zn66} derived the lower bound
$S(r;k) \geq \frac{k-1}{k}\left((k+1)^r-1\right)+1.$
Irving \cite{Irwing73} improved the upper bound for $S(r,k)$ in 1973 to
$S_k(n) \leq \left\lfloor \sqrt[k-1]{e}\, n!\, (k-1)^n \right\rfloor.$ Later, in 1982, Beutelspacher and Brestovansky \cite{BB82} proved that
$S(2;k)=k^2-k-1$ for $k \geq 2.$
Moreover, in 2016, Ahmed and Schaal \cite{AhSc16} computed the exact values of $S(3;k)$ for $k=4,5,6$. Later, in 2019, Boza et al. \cite{BMRS19} established that
$S(3;k)=k^3-k^2-k-1$ for $k \geq 3,$ by proving an upper bound that coincides with the lower bound previously obtained by Zn\'{a}m \cite{Zn66}.
The following table lists some exact value generalized Schur numbers $S(3;s,t,u)$.  For different variations, results, and references, see \cite{AhmedEtAl13,ABRS23,BialostockiSchaal00,BiSc00, BRS18, Budden20,EMRS12, EFFMRT13, Exoo94, KezdyEtAl09,LandmanRobertson04,MartinelliSchaal07,Schaal93,SchaalSnevily08,Schur16}.

\begin{table}[!htbp]
\footnotesize
\caption{Some exact values of $S(3;s,t,u)$.}
\centering
\begin{tabular}{lc}
\hline
$S(3;s,t,u)$ & References \\
\hline
$S(3;3,3,3)=14$, & \cite{Schur16} \\
\makecell[l]{
$S(3;3,3,4)=23,\ S(3;3,3,5)=32,\ S(3;3,3,6)=41,\ S(3;3,3,7)=49,$\\
$S(3;3,4,4)=31,\ S(3;3,4,5)=47,\ S(3;3,4,6)=49,\ S(3;3,4,7)=59,$\\
$S(3;3,5,5)=58,\ S(3;3,5,6)=70,\ S(3;3,5,7)=80,$\\
$S(3;3,6,6)=85,\ S(3;3,6,7)=107,$\\
$S(3;4,4,4)=43,\ S(3;4,4,5)=54,\ S(3;4,4,6)=65,\ S(3;4,4,7)=76,$\\
$S(3;4,5,5)=69,\ S(3;4,5,6)=83,\ S(3;4,5,7)=97,$\\
$S(3;4,6,6)=101,$\\
$S(3;5,5,5)=94,\ S(3;5,5,6)=113,$\\
$S(3;6,6,6)=173.$\\
} & \cite{AhSc16} \\
\hline
\end{tabular}
\label{table-2}
\end{table}

Robertson and Schaal \cite{RobertsonSchaal01} established the complete $2$-color result as follows.

\begin{theorem}{\upshape \cite{RobertsonSchaal01}}\label{RobertsonSchaal01}
For integers $s,t \geq 3$, we have
$$
S(2;s,t) =
\begin{cases}
3t-4 & \text{if } s=3 \text{ and } t \equiv 1 \pmod{2}, \\
3t-5 & \text{if } s=3 \text{ and } t \equiv 0 \pmod{2}, \\
st - t - 1 & \text{if } 4 \leq s \leq t.
\end{cases}
$$
\end{theorem}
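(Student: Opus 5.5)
This result is quoted from Robertson and Schaal~\cite{RobertsonSchaal01}, and the paper uses it without proof. If I were to prove it myself, I would split it into matching lower and upper bounds. I would do the generic case $4\le s\le t$ first and then the exceptional case $s=3$. Throughout, color $A$ must avoid monochromatic solutions of $\mathcal{L}(s)$ ($s-1$ summands) and color $B$ must avoid solutions of $\mathcal{L}(t)$ ($t-1$ summands). Note that summands may repeat.

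\textbf{Lower bound for $4\le s\le t$.} I would use a three-block coloring of $[1,st-t-2]$:
\[
A=[1,s-2]\cup[(t-1)(s-1),\,st-t-2],\qquad B=[s-1,\,(t-1)(s-1)-1].
\]
The top block is nonempty, since $(t-1)(s-1)\le st-t-2$ is equivalent to $s\ge 3$.

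For $B$: any sum of $t-1$ elements of $B$ is at least $(t-1)(s-1)$, which exceeds $\max B$. So $B$ has no solution of $\mathcal{L}(t)$.

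For $A$, consider a sum of $s-1$ elements of $A$.
\begin{itemize}
\item If all summands lie in $[1,s-2]$, the sum lies in $[s-1,(s-1)(s-2)]$. This interval is contained in $B$ because $(s-1)(s-2)<(t-1)(s-1)$.
\item If some summand lies in the top block, the sum is at least $(t-1)(s-1)+(s-2)=st-t-1>N$.
\end{itemize}
So $A$ has no solution of $\mathcal{L}(s)$ either, and hence $S(2;s,t)\ge st-t-1$.

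\textbf{Lower bound for $s=3$.} Here $A$ must be sum-free. The block construction above only gives $2t-2$, so the bounds $3t-4$ and $3t-5$ need a different coloring. I would make $A$ consist of odd numbers together with a top segment, and let $B$ be an initial segment plus a middle segment. The parity of $t$ then decides whether the extremal element $3t-5$ can be included, and I would check the sums in both parities directly.

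\textbf{Upper bound.} Suppose a $2$-coloring of $[1,N]$ has no bad monochromatic solution, where $N=st-t-1$ (respectively $3t-4$ or $3t-5$). I would run a forcing argument starting from the color of $1$.
\begin{itemize}
\item If $1\in B$, then $t-1=1+\cdots+1$ must lie in $A$. Sums of $s-1$ copies of elements of $A$ then push $(s-1)(t-1)$ into $B$. Combining $1$ with copies of $t-1$ gives further forced elements.
\item If $1\in A$, then $s-1\in B$. Repeating in $B$ forces $(t-1)(s-1)\in A$ and other elements, and similar chains continue.
\end{itemize}
Each branch continues until some element is forced into both colors, using identities such as $(s-2)\cdot 1+x=x+s-2$ and mixing large and small summands to reach exactly $N$.

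The main obstacle is this upper bound, particularly for $s=3$. There the forcing chains are sensitive to parity, and the proof has to show that $3t-5$ (respectively $3t-4$) is forced into a contradiction. I would first handle $t$ small by hand to see the pattern, and then organize the general case by whether $2\in A$ or $2\in B$, to tame the case explosion.
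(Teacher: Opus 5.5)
The paper gives no proof of this theorem; it only cites Robertson and Schaal. So your proposal has to stand on its own, and as written it does not prove the statement. Your block colouring for $4\le s\le t$ is correct and gives $S(2;s,t)\ge st-t-1$.

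\textbf{The upper bound is missing.} The upper bound is the real content of the theorem. You start the forcing chains correctly: $1\in A\Rightarrow s-1\in B\Rightarrow (s-1)(t-1)\in A$, and symmetrically from $1\in B$. But you then only assert that the chains continue until some element gets both colours. No branch is actually closed, and the branches are not even identified. Some branches do close quickly. For instance, if $[1,s-2]\subseteq A$, then $[s-1,(s-1)(s-2)]\subseteq B$. Hence every integer in $[(t-1)(s-1),(t-1)(s-1)(s-2)]$, in particular $(t-1)(s-1)$ and $N$, lies in $A$. But then $N=(t-1)(s-1)+(s-2)\cdot 1$ is an $(s-1)$-fold sum from $A$, a contradiction. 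Other branches, such as $1\in A,\ 2\in B$, or $1\in B$ when $s<t$, need a genuine case analysis, and that analysis is entirely absent. None of $S(2;s,t)\le st-t-1$, $S(2;3,t)\le 3t-4$ or $S(2;3,t)\le 3t-5$ is established.

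\textbf{The $s=3$ lower-bound sketch has the wrong shape.} For $t\ge 4$, no admissible colouring of $[1,2t-2]$ has $1\in B$. Suppose it did. Then $2\in A$: if $2\in B$, every integer of $[t-1,2t-2]$ is a $(t-1)$-fold sum from $\{1,2\}$, so lies in $A$, and then $(t-1)+(t-1)=2t-2$ is a solution in $A$. Hence $4\in B$, $t-1\in A$, $t-3\in B$ (as $2+(t-3)=t-1$), and $2t-2\in B$. Now $(t-3)+4+(t-3)\cdot 1=2t-2$ uses $t-1$ summands and is a solution of $\mathcal{L}(t)$ in $B$. So $B$ contains no initial segment. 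Moreover, since $1\in A$, $A$ contains no two consecutive integers, so it cannot contain a top segment either.

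A colouring that does work is
$A=\{x\ \text{odd}: x\le t-2\}\cup\{x\ \text{even}: 2t-2\le x\le N\}$ and $B=[1,N]\setminus A$, with $N=3t-5$ for odd $t$ and $N=3t-6$ for even $t$.
\begin{itemize}
\item $A$ is sum-free: odd$+$odd is at most $2t-4$; odd$+$even is odd and exceeds $t-2$; even$+$even is at least $4t-4>N$.
\item $B$ has no solution: every element of $B$ is at least $2$, so every $(t-1)$-fold sum from $B$ is at least $2t-2$. The even such sums lie in $A$. An odd one has an odd summand at least $t$ (resp.\ $t-1$), so it is at least $3t-4$ (resp.\ $3t-5$), which exceeds $N$.
\end{itemize}

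The same observation shows that your proposed split on the colour of $2$ for the $s=3$ upper bound is vacuous, since $1\in A$ and $2\in B$ are forced.
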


In this work, we resolve the following two conjectures for 3-color off-diagonal generalized Schur numbers.

\begin{conjecture}{\upshape \cite{AhSc16}}\label{conjecture-1}
For $3=t<u$ and $3<t \leq u$, we have
$$
S(3 ; 3, t, u)>3 t u-t u-u-1 .
$$
\end{conjecture}

\begin{conjecture}{\upshape \cite{AhSc16}}\label{conjecture-2}
For $4 \leq s \leq t \leq u$, we have
$$
S(3 ; s, t, u)=s t u-t u-u-1.
$$
\end{conjecture}

For brevity, we restate the core object of study: $S(3;s,t,u)$ is the least $N$ such that every 3-coloring of $[1,N]$ contains a monochromatic $\mathcal{L}(s)$ in color $1$, a monochromatic $\mathcal{L}(t)$ in color $2$, or a monochromatic $\mathcal{L}(u)$ in color $3$.

\section{A recursive lower bound}

Ahmed and Schaal in \cite{AhSc16} obtained a general lower bound, as shown below.

\begin{theorem}{\upshape \cite{AhSc16}}\label{AhSc16}
For all $r \geq 1$ and for all $k_0, k_1, \dots, k_{r-1}$ where $3 \leq k_0 \leq k_1 \leq \dots \leq k_{r-1}$,
\[
S(r; k_0, k_1, \dots, k_{r-1}) \geq \prod_{j=0}^{r-1} k_j - \sum_{i=1}^{r-1} \prod_{j=i}^{r-1} k_j - 1.
\]
\end{theorem}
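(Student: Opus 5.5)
The plan is to prove the bound by induction on $r$. At each step we exhibit an explicit $r$-coloring of $[1,T_r-1]$ that has no solution of $\mathcal{L}(k_i)$ monochromatic in color $i$, where
\[
T_r=\prod_{j=0}^{r-1} k_j-\sum_{i=1}^{r-1}\prod_{j=i}^{r-1}k_j-1 .
\]
The key identity is obtained by peeling off the \emph{largest} parameter $k_{r-1}$. Write $T'_{r-1}$ for the same expression formed from $k_0,\dots,k_{r-2}$ only. A direct expansion gives $k_{r-1}T'_{r-1}=T_r+1$. Hence, if $m=T'_{r-1}-1$ is the length of a good coloring for the first $r-1$ colors, the target length is $T_r-1=k_{r-1}(m+1)-2$. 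For the base case $r=1$ we take $T_1=k_0-1$ and color $[1,k_0-2]$ with color $0$. Any sum of $k_0-1$ positive integers is at least $k_0-1$, so no solution exists.

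For the inductive step, let $\Delta'$ be a good $(r-1)$-coloring of $[1,m]$, let $k=k_{r-1}$ and $N=k(m+1)-1$. Define $\Delta$ on $[1,N-1]$ in three blocks:
\begin{itemize}
\item on $[1,m]$, set $\Delta=\Delta'$;
\item on the middle block $[m+1,(k-1)(m+1)-1]$, use the new color $r-1$;
\item on the top block $[(k-1)(m+1),N-1]$, set $\Delta(y)=\Delta'(N-y)$.
\end{itemize}
The top block reflects the low block, and $N-y$ ranges exactly over $[1,m]$. The new color is safe because any sum of $k-1$ elements of the middle block is at least $(k-1)(m+1)$, which exceeds the block.

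For an old color $i\le r-2$, take a supposed monochromatic solution $x_1+\dots+x_{k_i-1}=x_{k_i}$ and split by how many of its entries lie in the top block.
\begin{itemize}
\item \emph{No top entries.} This is a solution under $\Delta'$, which is excluded by induction.
\item \emph{Two or more top summands.} The sum is at least $2(k-1)(m+1)>N$, which is impossible.
\item \emph{Only $x_{k_i}$ is top.} The sum of the summands is at most $(k_i-1)m<(k-1)(m+1)$. This uses $k_i\le k_{r-1}$, and it is the only place the monotone ordering of the $k_j$ is needed.
\item \emph{Exactly one summand $x_j$ and $x_{k_i}$ are both top.} Write $x_j=N-x_j'$ and $x_{k_i}=N-x'_{k_i}$. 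The equation becomes
\[
x_j'=x'_{k_i}+\sum_{l\ne j,\,k_i}x_l ,
\]
a solution of $\mathcal{L}(k_i)$ in $[1,m]$ monochromatic in color $i$ under $\Delta'$. This is again excluded by induction.
\item \emph{One summand is top but $x_{k_i}$ is not.} Then $x_{k_i}$ would be smaller than one of its own summands, which is impossible.
\end{itemize}

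The main obstacle is choosing the recursion correctly. Peeling off the smallest parameter $k_0$ and nesting the other colors in a scaled middle block gives a strictly weaker bound when $r\ge 3$. Copying the old coloring onto the top block by translation also fails, because a top element plus a small element can land on another top element. The reflection $y\mapsto N-y$ is exactly what turns those troublesome mixed solutions back into solutions inside $[1,m]$. Once this structure is fixed, the remaining work is the identity $k_{r-1}T'_{r-1}=T_r+1$ and the inequalities above, all of which are routine.
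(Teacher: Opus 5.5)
Your proof is correct, and it takes a more self-contained route than the paper. The paper never builds a coloring. It takes the one-step recursion \eqref{eq_lower}, $S(r;k_0,\dots,k_{r-1})\ge k_{r-1}S(r-1;k_0,\dots,k_{r-2})-1$, as a black box from the proof in \cite{AhSc16}, and iterates it into Theorem~\ref{th:iterated-recursive-lower}. It then obtains the closed form by (strong) induction on $r$, inserting the hypothesis for $S(m;k_0,\dots,k_{m-1})$ with some $2\le m\le r-1$ and simplifying the sums.

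Your three-block construction (old coloring on $[1,m]$, new color in the middle, reflected old coloring on top) is a direct proof of \eqref{eq_lower}. Your case analysis never uses the structure of $\Delta'$, only that it is good. So starting from an optimal coloring of length $S(r-1;\dots)-1$ yields \eqref{eq_lower} itself, and hence Theorem~\ref{th:iterated-recursive-lower}, not just the closed form. Your identity $k_{r-1}T'_{r-1}=T_r+1$ replaces the paper's sum manipulation.

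Your induction also covers $r=2$ directly. The paper's inductive step needs some $m$ with $2\le m\le r-1$, and Theorem~\ref{th:iterated-recursive-lower} is stated only for $r\ge 3$, so at $r=2$ the paper silently falls back on \eqref{eq_lower}. What the paper's framing keeps in view is the recursive inequality. That is what it later feeds exact values such as $S(2;3,t)\in\{3t-4,3t-5\}$, which exceed the closed form; your construction supports that use equally well.

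One remark in your final paragraph is wrong, though it does not affect the proof. Copying $\Delta'$ onto the top block by translation, $\Delta(c+a)=\Delta'(a)$ with $c=(k-1)(m+1)-1$, works just as well. Suppose $x_j=c+a$ and $x_{k_i}=c+b$ are the top entries. Then the equation becomes $a+\sum_{l\ne j,k_i}x_l=b$, which is again a color-$i$ solution of $\mathcal{L}(k_i)$ in $[1,m]$ under $\Delta'$. So a top element plus small elements can land on another top element only if $\Delta'$ already had a monochromatic solution. The reflection is a convenience (it makes the iterated colorings symmetric), not the key idea.
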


In the proof of \cite[Theorem 2.11]{AhSc16}, the following conclusion is established.
Let $r\ge 2$, let $3\le k_0\le k_1\le \cdots \le k_{r-1}$.
\begin{equation}\label{eq_lower}
S(r;k_0,\dots,k_{r-1})\ge k_{r-1}\,S(r-1;k_0,\dots,k_{r-2})-1
\end{equation}

\begin{theorem}\label{th:iterated-recursive-lower}
Let $r\ge 3$ and let $3\le k_0\le k_1\le \cdots \le k_{r-1}$.  Then for every integer $m$ with
$2\le m\le r-1$,
\[
S(r;k_0,\dots,k_{r-1})
\ge
\Bigl(\prod_{j=m}^{r-1}k_j\Bigr)\,S(m;k_0,\dots,k_{m-1})
-\sum_{i=m}^{r-1}\prod_{j=i+1}^{r-1}k_j.
\]
Here an empty product is interpreted as $1$.
\end{theorem}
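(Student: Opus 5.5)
We will prove the statement by downward induction on $m$, from $m=r-1$ to $m=2$, iterating the one-step recursion \eqref{eq_lower}. The key point is that \eqref{eq_lower} holds for every number of colors $r'\ge 2$ and every nondecreasing sequence of parameters at least $3$. In particular, it applies to every prefix $k_0,\dots,k_{r'-1}$ of our sequence, for $3\le r'\le r$.

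\textbf{Base case $m=r-1$.} The right-hand side becomes $k_{r-1}S(r-1;k_0,\dots,k_{r-2})-\prod_{j=r}^{r-1}k_j$. The empty product equals $1$, so this is exactly \eqref{eq_lower}.

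\textbf{Inductive step.} Assume the inequality holds for some $m$ with $3\le m\le r-1$. Applying \eqref{eq_lower} to the prefix of length $m$ (valid since $m\ge 2$) gives
\[
S(m;k_0,\dots,k_{m-1})\ge k_{m-1}S(m-1;k_0,\dots,k_{m-2})-1 .
\]
The coefficient $P_m:=\prod_{j=m}^{r-1}k_j$ is positive, so we may multiply this inequality by $P_m$ and substitute it into the inductive hypothesis. This yields
\[
S(r;k_0,\dots,k_{r-1})\ge P_{m-1}\,S(m-1;k_0,\dots,k_{m-2}) - P_m-\sum_{i=m}^{r-1}\prod_{j=i+1}^{r-1}k_j .
\]
Since $P_m=\prod_{j=(m-1)+1}^{r-1}k_j$ is exactly the $i=m-1$ term of the sum, the subtracted terms combine into $\sum_{i=m-1}^{r-1}\prod_{j=i+1}^{r-1}k_j$. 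This is the statement for $m-1$.

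We do not expect a real obstacle. The only points that need care are the index bookkeeping with the empty-product convention and the legitimacy of applying \eqref{eq_lower} to prefixes. The latter holds because the ordering hypothesis $3\le k_0\le\cdots$ is inherited by every prefix, and the prefix length $m$ is always at least $2$.
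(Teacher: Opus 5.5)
Your proof is correct and is essentially the paper's argument: both iterate \eqref{eq_lower} $r-m$ times and absorb each new constant into the sum using the empty-product convention. The only difference is cosmetic. You fix $r$ and unroll at the inner level (downward induction on $m$, multiplying \eqref{eq_lower} for $m$ colors by $P_m$), whereas the paper fixes $m$ and unrolls the outermost parameter $k_{r-1}$ (induction on $r-m$, applying the hypothesis to the $(r-1)$-color prefix).
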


\begin{proof}
We induct on $r-m$.  If $r-m=1$, then the formula is exactly \eqref{eq_lower}.
Assume now that $r-m\ge 2$ and that the statement holds with $r$ replaced by $r-1$.
By \eqref{eq_lower},
\[
S(r;k_0,\dots,k_{r-1})\ge k_{r-1}\,S(r-1;k_0,\dots,k_{r-2})-1.
\]
Applying the induction hypothesis to the factor $S(r-1;k_0,\dots,k_{r-2})$ gives
\[
S(r-1;k_0,\dots,k_{r-2})
\ge
\Bigl(\prod_{j=m}^{r-2}k_j\Bigr)S(m;k_0,\dots,k_{m-1})
-\sum_{i=m}^{r-2}\prod_{j=i+1}^{r-2}k_j.
\]
Substituting this into the previous inequality yields
\[
\begin{aligned}
S(r;k_0,\dots,k_{r-1})
&\ge k_{r-1}\left[
\Bigl(\prod_{j=m}^{r-2}k_j\Bigr)S(m;k_0,\dots,k_{m-1})
-\sum_{i=m}^{r-2}\prod_{j=i+1}^{r-2}k_j
\right]-1\\
&=
\Bigl(\prod_{j=m}^{r-1}k_j\Bigr)S(m;k_0,\dots,k_{m-1})
-\sum_{i=m}^{r-2}\prod_{j=i+1}^{r-1}k_j-1\\
&=
\Bigl(\prod_{j=m}^{r-1}k_j\Bigr)S(m;k_0,\dots,k_{m-1})
-\sum_{i=m}^{r-1}\prod_{j=i+1}^{r-1}k_j,
\end{aligned}
\]
as required.
\end{proof}

\begin{corollary}\label{cor:iterated-from-two-color}
For every $r\ge 3$ and every $3\le k_0\le \cdots \le k_{r-1}$,
\[
S(r;k_0,\dots,k_{r-1})
\ge
\Bigl(\prod_{j=2}^{r-1}k_j\Bigr)S(2;k_0,k_1)
-\sum_{i=2}^{r-1}\prod_{j=i+1}^{r-1}k_j.
\]
In particular, if $4\le s\le t\le k_2\le \cdots \le k_{r-1}$, then
\[
S(r;s,t,k_2,\dots,k_{r-1})
\ge
\Bigl(\prod_{j=2}^{r-1}k_j\Bigr)(st-t-1)
-\sum_{i=2}^{r-1}\prod_{j=i+1}^{r-1}k_j.
\]
\end{corollary}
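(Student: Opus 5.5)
The corollary follows by specializing Theorem~\ref{th:iterated-recursive-lower} to $m=2$ and then inserting the exact two-colour value from Theorem~\ref{RobertsonSchaal01}.

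\textbf{Step 1: the general inequality.} Since $r\ge 3$, the choice $m=2$ satisfies $2\le m\le r-1$. The hypothesis $3\le k_0\le\cdots\le k_{r-1}$ is exactly what that theorem requires. With $m=2$ its conclusion reads
\[
S(r;k_0,\dots,k_{r-1})\ge \Bigl(\prod_{j=2}^{r-1}k_j\Bigr)S(2;k_0,k_1)-\sum_{i=2}^{r-1}\prod_{j=i+1}^{r-1}k_j,
\]
with the same convention that an empty product equals $1$. This is the first displayed inequality, so nothing further is needed here.

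\textbf{Step 2: the special case.} Take $k_0=s$ and $k_1=t$ with $4\le s\le t\le k_2\le\cdots\le k_{r-1}$. The ordering hypothesis of Step~1 still holds, since $s\ge 4>3$. Because $4\le s\le t$, the third case of Theorem~\ref{RobertsonSchaal01} gives
\[
S(2;s,t)=st-t-1.
\]
The coefficient $\prod_{j=2}^{r-1}k_j$ is positive, so substituting this value into the bound of Step~1 yields the second displayed inequality.

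\textbf{Main point to check.} There is no real obstacle; the work is bookkeeping. One must confirm that $m=2$ is admissible, which needs $r\ge 3$. One must also confirm that the two-colour formula is applied in the correct regime: it requires $s\ge 4$ and $s\le t$, which is why the special case is stated under the assumption $4\le s\le t$.
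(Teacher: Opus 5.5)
Your proposal is correct and is essentially the paper's proof: set $m=2$ in Theorem~\ref{th:iterated-recursive-lower} (admissible since $r\ge 3$), then substitute the exact value $S(2;s,t)=st-t-1$ from Theorem~\ref{RobertsonSchaal01} for $4\le s\le t$. Your remark about the coefficient $\prod_{j=2}^{r-1}k_j$ being positive is harmless but unnecessary, since you are substituting an equality rather than a lower bound.
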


\begin{proof}
Upon setting $m=2$ in Theorem~\ref{th:iterated-recursive-lower}, the second displayed equation is a consequence of \cite{RobertsonSchaal01}, namely
$S(2;s,t)=st-t-1$ for $4\le s\le t.$
\end{proof}

Using Theorem \ref{th:iterated-recursive-lower}, we are able to give an alternative proof of Theorem \ref{AhSc16}.

\begin{proof}[Proof of Theorem \ref{AhSc16}]
We proceed by induction on $r$. For $r=1$, it is readily seen that $S(1;k_0)=k_0-1$. Suppose the statement holds for $r-1$, we now establish the result for $r$. For $2 \leq m \leq r-1$,
by the induction hypothesis, we have
\[
S(m; k_0, \dots, k_{m-1}) \geq \prod_{j=0}^{m-1} k_j - \sum_{i=1}^{m-1} \prod_{j=i}^{m-1} k_j - 1,
\]
By Theorem $\ref{th:iterated-recursive-lower}$, we have
\[
\begin{aligned}
S(r; k_0, \dots, k_{r-1}) &\geq \left( \prod_{j=m}^{r-1} k_j \right) \left( \prod_{j=0}^{m-1} k_j - \sum_{i=1}^{m-1} \prod_{j=i}^{m-1} k_j - 1 \right) - \sum_{i=m}^{r-1} \prod_{j=i+1}^{r-1} k_j \\
&= \prod_{j=0}^{r-1} k_j - \sum_{i=1}^{m-1} \prod_{j=i}^{r-1} k_j - \prod_{j=m}^{r-1} k_j - \sum_{i=m}^{r-1} \prod_{j=i+1}^{r-1} k_j.
\end{aligned}
\]
Note that
\[
\prod_{j=m}^{r-1} k_j + \sum_{i=m}^{r-1} \prod_{j=i+1}^{r-1} k_j = \sum_{i=m}^{r-1} \prod_{j=i}^{r-1} k_j + 1,
\]
since the empty product appearing in the final term is equal to $1$.
Consequently, the aforementioned inequality reduces to
\[
S(r; k_0, \dots, k_{r-1}) \geq \prod_{j=0}^{r-1} k_j - \sum_{i=1}^{r-1} \prod_{j=i}^{r-1} k_j - 1,
\]
which is precisely the stated bound.
\end{proof}

\section{Complete solution to Conjecture \ref{conjecture-1}}

In what follows, we present lower bounds for the two conjectures.

\begin{proposition}\label{pro-1}
Let $3 \leq s \leq t \leq u$ be integers.
\begin{itemize}
\item If $4 \leq s \leq t \leq u$, then
$
S(3;s,t,u) \geq stu-tu-u-1.
$
\item If $3=t<u$ and $3<t \leq u$, then
$
S(3;3,t,u) >2tu-u-1.
$
\end{itemize}
\end{proposition}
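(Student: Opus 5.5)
The first item should follow directly from Theorem~\ref{AhSc16}. With $r=3$ and $(k_0,k_1,k_2)=(s,t,u)$, the bound reads $stu-(tu+u)-1$, which is exactly the claimed inequality. The hypothesis $4\le s$ is only needed to say that this bound is the conjectured value; it plays no role in the lower bound itself. So the real work is in the second item, where we need a valid $3$-coloring of $[1,2tu-u-1]$, i.e.\ we must show $S(3;3,t,u)\ge 2tu-u$.

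For the second item I would first combine \eqref{eq_lower} with $r=3$ and Theorem~\ref{RobertsonSchaal01}:
\[
S(3;3,t,u)\ge u\,S(2;3,t)-1 .
\]
\textbf{$t$ odd, $t\ge 5$.} Here $S(2;3,t)=3t-4$, so the bound is $3tu-4u-1$. This exceeds $2tu-u-1$ by $u(t-3)>0$.
\textbf{$t$ even, $t\ge 6$.} Here $S(2;3,t)=3t-5$, so the bound is $3tu-5u-1$. This exceeds $2tu-u-1$ by $u(t-4)>0$.
Hence strict inequality holds in all cases except $t=3$ and $t=4$. In those two cases the recursive bound gives exactly $5u-1$ and $7u-1$, which equal $2tu-u-1$. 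So we must gain one more integer: we need a good coloring of $[1,5u-1]$ for $(3,3,u)$ with $u>3$, and of $[1,7u-1]$ for $(3,4,u)$ with $u\ge 4$.

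For these boundary cases I would unwind the construction behind \eqref{eq_lower}. That construction inflates a good $2$-coloring of $[1,n]$, with $n=S(2;3,t)-1$, into a good $3$-coloring of $[1,un-1]$: color $3$ goes on an initial segment and a top segment, and the $2$-coloring is scaled up on the middle blocks. I would then try to append the single new element $N=un$ (that is, $5u-1$ or $7u-1$).

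The plan is to pick the underlying $2$-coloring so that $N$ can join some color class. For $(3,3)$ there is essentially one choice, $\{1,4\}$ versus $\{2,3\}$. For $(3,4)$ I would enumerate the few extremal colorings of $[1,6]$. For each candidate color, I check the solutions of $\mathcal{L}(3)$, $\mathcal{L}(t)$ or $\mathcal{L}(u)$ whose largest variable is $N$. Since every other element already avoids monochromatic solutions, only solutions with $x_t=N$ need checking. In color $3$, $u-1$ summands drawn from the color-$3$ set must not sum to $N$. With the initial segment $[1,u-2]$ and the top block, these sums are constrained modulo the block structure. I expect $N$ to fall into a gap just above the largest attainable sum, or else a color-$1$/$2$ placement to work via a parity argument.

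The main obstacle is this one-element extension in the cases $t\in\{3,4\}$. If a uniform choice fails, I would instead write down an explicit coloring of $[1,2tu-u-1]$ directly, for example by slightly shifting the color-$3$ initial segment to $[1,u-1]$. I would then verify it by case analysis on the largest variable of a putative monochromatic solution. The small cases can be sanity-checked against Table~\ref{table-2}: $S(3;3,3,4)=23>19$ and $S(3;3,4,4)=31>27$.
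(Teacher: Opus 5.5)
Your first item (Theorem~\ref{AhSc16} with $r=3$, where the paper instead uses Corollary~\ref{cor:iterated-from-two-color} and $S(2;s,t)=st-t-1$) is correct and agrees with the paper. So is your treatment of $t\ge 5$ in the second item.

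The gap is the cases $t\in\{3,4\}$. These are exactly where \eqref{eq_lower} is tight and where the conjecture has its content. You give no coloring of $[1,2tu-u-1]$, and the routes you sketch do not work.

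\textbf{The extension you propose is impossible.} The natural coloring realizing \eqref{eq_lower} puts a good $2$-coloring $\chi$ of $[1,n]$, with $n=S(2;3,t)-1$, at the bottom. It puts color $3$ on the whole middle interval $[n+1,(u-1)(n+1)-1]$ and a shifted copy of $\chi$ on the last $n$ integers. It colors $[1,u(n+1)-2]$, so the element to add is $u(n+1)-1$, not $un$. Your own description, with $[1,u-2]$ in color $3$, cannot realize the bound: sums of $u-1$ elements of $[1,u-2]$ fill $[u-1,(u-1)(u-2)]$, so for large $u$ color $3$ can have no top segment.

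The new element fits in no color:
\begin{itemize}
\item For $t=3$ the coloring is $\{1,4\},\{2,3\}$ on $[1,4]$, color $3$ on $[5,5u-6]$, and the pattern $1,2,2,1$ on $[5u-5,5u-2]$. Then $5u-1=5(u-2)+9$ is a color-$3$ solution of $\mathcal{L}(u)$. Also $1+(5u-2)$ and $2+(5u-3)$ are Schur triples in colors $1$ and $2$.
\item Choosing another base coloring does not help. The color-$3$ obstruction $(u-2)(n+1)+(2n+1)=u(n+1)-1$ comes from the middle interval alone.
\item For $t=4$ likewise: $7u-1=7(u-2)+13$ in color $3$, $7u-1=1+(7u-2)$ in color $1$, and $7u-1=2+2+(7u-5)$ in color $2$.
\end{itemize}
Your fallback fails immediately. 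If $[1,u-1]$ lies in color $3$, then $1+\cdots+1=u-1$ with $u-1$ summands is a monochromatic solution of $\mathcal{L}(u)$.

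\textbf{What is missing.} You have to alter the interior of the coloring, not just append an element.

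For $t=3$, $u\ge 5$, the paper takes $A_1=\{1,4,6,9,5u-5,5u-2\}$, $A_2=\{2,3,7,8,5u-4,5u-3\}$, $A_3=\{5,5u-1\}\cup[10,5u-6]$. Recoloring $[6,9]$ by a copy of $\chi$ isolates $5$ in color $3$. So $u-1$ color-$3$ summands are either all equal to $5$, with sum $5u-5\in A_1$, or sum to at least $5(u-2)+10=5u>N$. This frees $5u-1$ for color $3$. Colors $1$ and $2$ are then finite checks, using $u\ge 5$ so that the relevant sums $10,\dots,18$ lie in $[10,5u-6]$.

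For $t=4$, $u\ge 4$, the paper takes $A_1=\{1,6,8,7u-7,7u-5\}$, $A_2=[2,5]\cup[7u-4,7u-1]$, $A_3=\{7,7u-6\}\cup[9,7u-8]$. Color-$3$ sums are either $7u-7\in A_1$ or at least $7u-5$, and $[7u-5,7u-1]$ is disjoint from $A_3$.

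The remaining case $(3,3,4)$ is read off Table~\ref{table-2}. Without explicit colorings of this kind, together with their verification, your argument establishes the second item only for $t\ge 5$.
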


\begin{proof}
If $4 \leq s \leq t \leq u$, then it follows from Corollary \ref{cor:iterated-from-two-color} that
$S(3;s,t,u) \geq u (st-t-1) -1=stu-tu-u-1.$

Let $N=2tu-u-1$ and $\Delta$ be a $3$-coloring defined as
$\Delta : \left[1, N\right] \to \{1,2,3\},$
where $1,2,3$ denote three distinct colors. For $i = 1,2,3$, let $A_i = \Delta^{-1}(i)$. It is straightforward to verify that $\left[1, N\right] = A_1 \cup A_2  \cup A_3.$ We shall show that there exists a coloring of $[1,N]$ admitting no monochromatic copy of $\mathcal{L}(3)$ in color $1$, no monochromatic copy of $\mathcal{L}(t)$ in color $2$, and no monochromatic copy of $\mathcal{L}(u)$ in color $3$.

We next consider the cases $3 = t < u$ and $3 < t \leq u$. In what follows, we split into the following three cases. It was shown in \cite{AhSc16} that \(S(3;3,3,4)=23>19\). Therefore, in what follows, we restrict our attention to the case \(u \geq 5\) for \(S(3;3,3,u)\).

\begin{case}
$t=3$ and $u \geq 5$.
\end{case}

Then $N=5u-1$. Let
\begin{equation*}
\begin{cases}
A_1 =\{1,4,6,9,5u-5,5u-2\}, \\
A_2 =\{2,3,7,8,5u-4,5u-3\}, \\
A_3 =\{5,5u-1\} \cup [10,5u-6].
\end{cases}
\end{equation*}

Let \(x_1 \leq x_2 \) and $x_3=x_1+x_2$ be integers colored \(1\). We first verify that the triple \((x_1,x_2,x_3)\) is not a solution to \(\mathcal{L}(3)\).
If $x_1=1$ and $x_2 \in \{1,4,6,9,5u-5,5u-2\}$, then $x_3 \in \{2,5,7,10,5u-4,5u-1\}
\nsubseteq A_1$, and hence $(x_1, x_2, x_3 )$ is not a solution to \(\mathcal{L}(3)\) in color $1$.
If $x_1=4$ and $x_2 \in \{4,6,9,5u-5,5u-2\}$, then $x_3 \in \{8,10,13, 5u-1,5u+2\} \nsubseteq A_1$, and hence $(x_1, x_2, x_3 )$ is not a solution to \(\mathcal{L}(3)\) in color $1$.
It follows from \(u \geq 5\) that \(5u-5 \geq 20\).
If $x_1, x_2 \in \{6,9\}$, then $x_3 \in \{12,15,18\} \nsubseteq A_1$, and hence $(x_1, x_2, x_3 )$ is not a solution to \(\mathcal{L}(3)\) in color $1$. If $x_1 \in \{6,9\}$ and $x_2 \in \{5u-5,5u-2\}$, then $x_3 \geq 5u+1 >N$, and hence $(x_1, x_2, x_3 )$ is not a solution to \(\mathcal{L}(3)\) in color $1$. If $x_1, x_2 \in \{5u-5,5u-2\}$, then $x_3 \geq 10u-10 > 5u-1=N$ for $u \geq 5$, and hence $(x_1, x_2, x_3 )$ is not a solution to \(\mathcal{L}(3)\) in color $1$.

Let \(x_1 \leq x_2\)  and $x_3=x_1+x_2$ be integers colored \(2\). We first verify that the triple \((x_1,x_2,x_3)\) is not a solution to \(\mathcal{L}(3)\).
If $x_1 \in \{2,3\}$ and $x_2 \in \{2,3,7,8,5u-4,5u-3\}$, then $x_3\in \{4,5,6,9,10,11,5u-2,5u-1,5u\} \nsubseteq A_2$, and hence $(x_1, x_2, x_3 )$ is not a solution to \(\mathcal{L}(3)\) in color $2$.
It follows from \(u \geq 5\) that \(5u-4 \geq 21\).
If $x_1, x_2 \in \{7,8\}$, then $x_3 \in \{14,15,16\} \nsubseteq A_1$, and hence $(x_1, x_2, x_3 )$ is not a solution to \(\mathcal{L}(3)\) in color $2$.
If $x_1 \in \{7,8\}$ and $x_2 \in \{5u-4,5u-3\}$, then $x_3 \geq 5u+3 >N$, and hence $(x_1, x_2, x_3 )$ is not a solution to \(\mathcal{L}(3)\) in color $2$. If $x_1, x_2 \in \{5u-4,5u-3\}$, then $x_3 \geq 10u-8> 5u-1=N$ for $u \geq 5$, and hence $(x_1, x_2, x_3 )$ is not a solution to \(\mathcal{L}(3)\) in color $2$.

Let \(x_1 \leq x_2 \leq \ldots \leq x_{u-1}\) and $x_u=\sum_{i=1}^{u-1} x_i$ be integers colored \(3\). We first verify that \((x_1,x_2,\ldots, x_u)\) is not a solution to \(\mathcal{L}(u)\).
If any $x_i=5$ for $1 \leq i \leq u-1$,
then $x_u=5u-5 \notin A_3$, and hence \((x_1,x_2,\ldots, x_u)\) is not a solution to \(\mathcal{L}(u)\) in color $3$.
If $x_{u-1} \geq 10$, then $x_u \geq 5(u-2)+10=5u >N$, and hence \((x_1,x_2,\ldots, x_u)\) is not a solution to \(\mathcal{L}(u)\) in color $3$.
It follows that $S(3;3,3,u)>N=5u-1$.

\begin{case}
$t=4$ and $u \geq 4$.
\end{case}

Then $N=7u-1$. Let
\begin{equation*}
\begin{cases}
A_1 =\{1,6,8,7u-7,7u-5\}, \\
A_2= [2,5]\cup [7u-4, 7u-1], \\
A_3 =\{7,7u-6\} \cup [9,7u-8].
\end{cases}
\end{equation*}

Let \(x_1 \leq x_2 \leq x_3\) and $x_3=x_1+x_2$ be integers colored \(1\). We first show that the triple \((x_1,x_2,x_3)\) cannot be a monochromatic solution to \(\mathcal{L}(3)\) in color \(1\).
If \(x_1=1\) and \(x_2 \in \{1,6,8,7u-7,7u-5\}\), then $x_3 \in \{2,7,9,7u-6,7u-4\} \nsubseteqq A_1.$
Hence, \((x_1,x_2,x_3)\) is not a solution to \(\mathcal{L}(3)\) in color \(1\). Since \(u \geq 4\), we have \(7u-7 \geq 21\). If \(x_1,x_2 \in \{6,8\}\), then
$x_3 \in \{12,14,16\} \nsubseteqq A_1,$
and so \((x_1,x_2,x_3)\) is not a solution to \(\mathcal{L}(3)\) in color \(1\). If \(x_1 \in \{6,8\}\) and \(x_2 \in \{7u-7,7u-5\}\), then $x_3=7u-1 \notin A_1$ or $x_3 \geq 7u+1 > N,$
and therefore \((x_1,x_2,x_3)\) is not a solution to \(\mathcal{L}(3)\) in color \(1\). Finally, if \(x_1,x_2 \in \{7u-7,7u-5\}\), then $x_3 \geq 14 u-14 > 5u-1 =N $ for \(u \geq 4\). Hence, \((x_1,x_2,x_3)\) is not a solution to \(\mathcal{L}(3)\) in color \(1\).

Let \(x_1 \leq x_2 \leq x_3 \leq x_4\) and $x_4=\sum_{i=1}^3 x_i$ be integers colored \(2\). We first show that \((x_1,x_2,x_3,x_4)\) cannot be a monochromatic solution to \(\mathcal{L}(4)\) in color \(2\). If $x_3 \geq 7u-4$, then $x_4 \geq 4+7u-4=7u >N$.
Since $u \geq 4$, it follows that $7u-4 \geq 24$. If $x_3 \leq 5$, then $ 6 \leq x_4 \leq 15$. Thus, $x_4 \notin A_2$. Therefore, \((x_1,x_2,x_3,x_4)\) is not a monochromatic solution to \(\mathcal{L}(4)\) in color \(2\).

Let \(x_1 \leq x_2 \leq \ldots \leq x_{u-1}\) and $x_u=\sum_{i=1}^{u-1} x_i$ be integers colored \(3\). We first verify that \((x_1,x_2,\ldots, x_u)\) is not a solution to \(\mathcal{L}(u)\).
If any $x_i=7$ for $1 \leq i \leq u-1$,
then $x_u=7u-7 \notin A_3$, and hence \((x_1,x_2,\ldots, x_u)\) is not a solution to \(\mathcal{L}(u)\) in color $3$.
If $x_{u-1} \geq 9$, then $x_u \geq 7(u-2)+9=7u-5$, that is, $x_u \notin A_3$,  and hence \((x_1,x_2,\ldots, x_u)\) is not a solution to \(\mathcal{L}(u)\) in color $3$. It follows that $S(3;3,4,u)>N=7u-1$.

\begin{case}
$u \geq t \geq 5$.
\end{case}

For $u \geq t \geq 5$, from Theorem \ref{AhSc16} and Corollary \ref{cor:iterated-from-two-color}, we have
$$
S(3;s,t,u) \geq u \cdot S(2;s,t)-1
=
\begin{cases}
3tu-4u-1 & \text{if }t \equiv 1 \pmod{2}, \\
3tu-5u-1 & \text{if }t \equiv 0 \pmod{2}.
\end{cases}
$$
We consider two cases according to the parity of \(t\). If \(t\) is odd, that is, \(t \equiv 1 \pmod{2}\), then \(t \geq 5\), and hence $3tu-4u-1 > 2tu-u-1.$
If \(t\) is even, that is, \(t \equiv 0 \pmod{2}\), then \(t \geq 6\), and therefore, $3tu-5u-1 > 2tu-u-1.$ Then $S(3;s,t,u)>2tu-u-1.$
\end{proof}

\section{Upper bounds and Conjecture \ref{conjecture-2}}

We now derive the relationship between generalized Schur number and Ramsey number.

\begin{theorem}\label{th:ramsey-embedding}
For every $r\ge 2$ and every $k_0,\dots,k_{r-1}\ge 3$, we have
\[
S(r;k_0,\dots,k_{r-1})\le R_r(k_0,\dots,k_{r-1})-1.
\]
In particular, $S(3;s,t,u)\le R_3(s,t,u)-1.$
\end{theorem}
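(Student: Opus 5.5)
The plan is the classical Schur-via-Ramsey argument. Put $N=R_r(k_0,\dots,k_{r-1})-1$ and fix an arbitrary $r$-coloring $\Delta:[1,N]\to\{0,1,\dots,r-1\}$. We will show that some color $i$ carries a monochromatic solution of $\mathcal{L}(k_i)$. This gives $S(r;k_0,\dots,k_{r-1})\le N$.

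First, we build an edge-coloring of the complete graph on the vertex set $\{1,2,\dots,N+1\}$, which has exactly $R_r(k_0,\dots,k_{r-1})$ vertices. Give the edge $\{a,b\}$ with $a<b$ the color $\Delta(b-a)$. This is well defined because $1\le b-a\le N$. By the definition of the multicolor Ramsey number, there is a color $i$ and a clique on $k_i$ vertices $v_1<v_2<\dots<v_{k_i}$ all of whose edges have color $i$.

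Next, we read off a solution. Set $x_j=v_{j+1}-v_j$ for $1\le j\le k_i-1$, and $x_{k_i}=v_{k_i}-v_1$. Each $x_j$ is a positive integer in $[1,N]$, and each is the color of an edge of the clique, so $\Delta(x_j)=i$ for every $j$. The sum telescopes:
\[
x_1+\cdots+x_{k_i-1}=v_{k_i}-v_1=x_{k_i}.
\]
Hence $(x_1,\dots,x_{k_i})$ is a solution of $\mathcal{L}(k_i)$ that is monochromatic in color $i$, as required. The special case $r=3$ gives $S(3;s,t,u)\le R_3(s,t,u)-1$.

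There is no real obstacle here. The only point to check is the convention in the definition of $\mathcal{L}(t)$: the variables need not be distinct, and repeated values are allowed, as in the case checks of Proposition~\ref{pro-1}. Since the $x_j$ above may coincide, this convention is exactly what the argument needs. One should also confirm the index shift: the vertex set must have $N+1=R_r$ vertices to guarantee the clique, while all differences stay in $[1,N]$.
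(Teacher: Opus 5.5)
Your proof is correct and is essentially the paper's argument: you color the edge $\{a,b\}$ of $K_{N+1}$ by $\Delta(b-a)$, extract a monochromatic $K_{k_i}$, and read off the consecutive differences together with the total difference as a monochromatic solution of $\mathcal{L}(k_i)$. The only cosmetic difference is that you label the vertices $\{1,\dots,N+1\}$ where the paper uses $\{0,\dots,N\}$.
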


\begin{proof}
Let $N=R_r(k_0,\dots,k_{r-1})-1.$ We will show that any $k$-coloring of $[1,N]$, for some $i \in \{0,1,\dots,k-1\}$, a solution to $\mathcal{L}(t_i)$ with all variables monochromatic in color $i$. Consider any $r$-coloring
$\chi:[1,N]\to\{0,1,\dots,r-1\}$.
We construct an $r$-edge-coloring of the complete graph $K_{N+1}$ on vertex set
$\{0,1,2,\dots,N\}$
by declaring that for $0\le u<v\le N$, the edge $\{u,v\}$ receives the color $\chi(v-u).$
Since $K_{N+1}$ has $N+1=R_r(k_0,\dots,k_{r-1})$ vertices, for $r$-edge-coloring of $K_{N+1}$, it contains a monochromatic clique
$K_{k_i}$ in some color $i$ for $0 \leq i \leq r-1$. Write the vertices of that clique in increasing order as
$a_0<a_1<\cdots<a_{k_i-1}$.
Because every edge of the clique has color $i$, the following $k_i$ positive integers all have color $i$:
\[
x_1=a_1-a_0,\quad x_2=a_2-a_1,\quad \dots,\quad x_{k_i-1}=a_{k_i-1}-a_{k_i-2},\quad x_{k_i}=a_{k_i-1}-a_0.
\]
Moreover,
$x_1+x_2+\cdots+x_{k_i-1}=a_{k_i-1}-a_0=x_{k_i}$.
Thus $x_1,\dots,x_{k_i}$ form a monochromatic color-$i$ solution to $\mathcal{L}(k_i)$.
Since the original coloring of $[1,N]$ was arbitrary, this proves that
$S(r;k_0,\dots,k_{r-1})\le N=R_r(k_0,\dots,k_{r-1})-1$.
\end{proof}

\begin{theorem}{\upshape \cite{HeWigderson2020}}\label{HeWigderson2020}
Fix $r\ge 2$ and fixed integers $k_0,\dots,k_{r-2}\ge 3$.
Then there exists a constant $C_{k_0,\dots,k_{r-2}}>0$ such that for all sufficiently large $u$,
\[
R_r(k_0,\dots,k_{r-2},u)
\le
C_{k_0,\dots,k_{r-2}}\,\frac{u^{\sum_{i=0}^{r-2}(k_i-2)+1}}{(\log u)^{\sum_{i=0}^{r-2}(k_i-2)}}.
\]
\end{theorem}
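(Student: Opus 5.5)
Because Theorem~\ref{HeWigderson2020} is quoted from \cite{HeWigderson2020}, I would not reprove it from scratch. I would reconstruct it by induction on $s=\sum_{i=0}^{r-2}(k_i-2)$, using the Ajtai--Koml\'os--Szemer\'edi / Shearer method for independence numbers of locally sparse graphs.

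\textbf{Setup.} Fix an $r$-edge-colouring of $K_N$ with no $K_{k_i}$ in colour $i$ for $i\le r-2$ and no $K_u$ in colour $r-1$. Let $G$ be the graph formed by the edges of colours $0,\dots,r-2$. Then every independent set of $G$ is a colour-$(r-1)$ clique, so $\alpha(G)<u$. The aim is to show that $G$ has small degrees and locally sparse neighbourhoods, which forces $\alpha(G)$ to be large unless $N$ is small.

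\textbf{Base case.} For $r=2$ this is exactly the AKS bound
\[
R(k,u)\le C_k\,\frac{u^{k-1}}{(\log u)^{k-2}}.
\]
This bound itself follows from the same scheme below, using $R(3,u)=O(u^2/\log u)$.

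\textbf{Step 1: degree bound.} For a vertex $v$ and $i\le r-2$, let $N_i(v)$ be the set of vertices joined to $v$ in colour $i$. Inside $N_i(v)$ there is no colour-$i$ clique $K_{k_i-1}$, and the other restrictions are inherited. Hence
\[
|N_i(v)|<R_r(k_0,\dots,k_i-1,\dots,k_{r-2},u).
\]
(If $k_i-1=2$, colour $i$ is simply absent.) By the induction hypothesis with $s-1$ in place of $s$, the degree $d(v)=\deg_G(v)$ satisfies
\[
d(v)\le D:=C'\,\frac{u^{s}}{(\log u)^{s-1}}.
\]

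\textbf{Step 2: local sparseness.} I would show that $G[N_G(v)]$ is sparse relative to $D$, with each vertex there having $G$-degree at most $D^{1-\varepsilon}$, or at least few edges on average. A vertex $w\in N_i(v)$ adjacent to $v$ in colour $i$ has its own colour-$i$ neighbourhood inside $N_i(v)$ avoiding $K_{k_i-2}$. So its degree within $N_i(v)$ is bounded by a Ramsey number with two fewer units of the parameter. Cross-colour edges need a separate count.

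\textbf{Step 3: conclusion.} With local sparseness, the AKS/Alon lemma gives $\alpha(G)\ge c\,(N/D)\log D$. Since $\alpha(G)<u$ and $\log D\asymp\log u$,
\[
N\le C\,\frac{uD}{\log u}=C''\,\frac{u^{s+1}}{(\log u)^{s}},
\]
which is the claimed bound.

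\textbf{Main obstacle.} I expect Step~2 to be the hardest part: establishing enough sparseness of neighbourhoods to get the full $\log D$ gain rather than Shearer's weaker $\log D/\log\log D$. The difficulty is controlling edges of colour $j\ne i$ inside $N_i(v)$.

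My first attempt would be a random-greedy (nibble) or weighted-entropy argument in the style of Shearer, applied separately to each colour class. I would exploit that the graph of each colour $j$ is $K_{k_j}$-free and that $k_j$ is fixed as $u\to\infty$. The aim is to show that a typical vertex lies in $O(D^{2-\varepsilon})$ neighbourhood edges, so that the local-density hypothesis of the AKS lemma holds with the constants absorbed into $C_{k_0,\dots,k_{r-2}}$.
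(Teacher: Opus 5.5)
The paper gives no proof of this statement; it is imported from He and Wigderson, so your sketch has to stand on its own. Its architecture is right: induction on $s=\sum_i(k_i-2)$, the degree bound $D=O(u^{s}/(\log u)^{s-1})$ of Step~1, an Ajtai--Koml\'os--Szemer\'edi type lemma, and $\alpha(G)<u$. But Step~2, the only non-routine step, is left open, and the form you first aim for cannot be obtained. Take $w\in N_i(v)$ and $l\ne i$. The set $N_l(w)\cap N_l(v)$ is only known to contain no colour-$l$ $K_{k_l-1}$, because the edge $vw$ has colour $i$ and cannot extend a colour-$l$ clique. So the colouring constraints bound it only by $R_r(\dots,k_l-1,\dots,u)$, which is of the same order as $D$, and no bound $D^{1-\varepsilon}$ on the maximum degree of $G[N_G(v)]$ follows. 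A nibble or entropy argument is also not what is missing. Such arguments prove the independence-number lemma itself. What is needed here is to verify the lemma's hypothesis, which is a deterministic count that must hold for every $v$, not just a typical one.

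The missing idea is to count the edges of $G[N_G(v)]$, i.e.\ the triangles through $v$, by charging. Let $vw$, $vx$, $wx$ have colours $a,b,c\le r-2$. Put $y=x$ if $a\ne b=c$, and $y=w$ otherwise. Then the other endpoint of the triangle lies in one of two kinds of set:
\begin{itemize}
\item $N_a(v)\cap N_a(y)$ with $vy$ of colour $a$ (this is the case $a=b=c$), which contains no colour-$a$ $K_{k_a-2}$;
\item some $N_p(v)\cap N_q(y)$ with $p\ne q$, which contains no colour-$p$ $K_{k_p-1}$ and no colour-$q$ $K_{k_q-1}$.
\end{itemize}
Either way $s$ drops by two. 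Summing over $y\in N_G(v)$ and over the at most $(r-1)^2$ colour pairs, the induction hypothesis gives, for every vertex $v$ and with $D$ the bound from Step~1,
\[
e\bigl(G[N_G(v)]\bigr)\le (r-1)^2\,D\cdot O\Bigl(\frac{u^{s-1}}{(\log u)^{s-2}}\Bigr)=\frac{D^2}{f},\qquad f\asymp\frac{u}{\log u}.
\]
Now apply the Alon--Krivelevich--Sudakov form of the lemma: if the maximum degree is at most $D$ and every neighbourhood spans at most $D^2/f$ edges, then $\alpha(G)\ge c\,\frac{N}{D}\log f$. This yields $u>\alpha(G)\ge c'\frac{N}{D}\log u$, which is the claimed bound. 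In the multicolour setting it is this edge-count version that is available, not a bound on degrees inside neighbourhoods.
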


The next theorem records a general one-large-parameter upper bound obtained by combining
Theorem~\ref{th:ramsey-embedding} and Theorem \ref{HeWigderson2020}.

\begin{corollary}\label{coroll-2}
Fix $r\ge 2$ and fixed integers $k_0,\dots,k_{r-2}\ge 3$.
Then there exists a constant $C_{k_0,\dots,k_{r-2}}>0$ such that for all sufficiently large $u$,
\[
S(r;k_0,\dots,k_{r-2},u)
\le
C_{k_0,\dots,k_{r-2}}\,\frac{u^{\sum_{i=0}^{r-2}(k_i-2)+1}}{(\log u)^{\sum_{i=0}^{r-2}(k_i-2)}}-1.
\]
In particular, fix integers $3\le s\le t$.  Then there exists a constant $C_{s,t}>0$ such that for all sufficiently large $u$,
\[
S(3;s,t,u)\le C_{s,t}\,\frac{u^{s+t-3}}{(\log u)^{s+t-4}}-1.
\]
\end{corollary}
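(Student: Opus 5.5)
The plan is to chain the two preceding results directly. Fix $r\ge 2$ and integers $k_0,\dots,k_{r-2}\ge 3$, and let $u$ be large. Theorem~\ref{th:ramsey-embedding} holds for every choice of parameters $\ge 3$, so applying it with $k_{r-1}=u$ gives
\[
S(r;k_0,\dots,k_{r-2},u)\le R_r(k_0,\dots,k_{r-2},u)-1 .
\]
Theorem~\ref{HeWigderson2020} then provides a constant $C_{k_0,\dots,k_{r-2}}$ and a threshold $u_0$. For $u\ge u_0$ we bound the Ramsey number by $C_{k_0,\dots,k_{r-2}}\,u^{\sum_{i=0}^{r-2}(k_i-2)+1}/(\log u)^{\sum_{i=0}^{r-2}(k_i-2)}$. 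Subtracting $1$ gives the first displayed inequality, with the same constant and the same threshold.

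Two small points need checking. First, Theorem~\ref{th:ramsey-embedding} does not require the parameters to be ordered, so putting the large parameter $u$ in the last slot is harmless. Even if an ordering were needed, both $S$ and $R_r$ are symmetric under permuting the colors together with their parameters, so we may reorder freely. Second, the Ramsey-embedding argument uses only that $K_{N+1}$ has $R_r$ vertices, so it applies verbatim for every $u\ge 3$.

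For the particular case, take $r=3$, $k_0=s$, $k_1=t$ with $3\le s\le t$. The exponent sum is $\sum_{i=0}^{1}(k_i-2)=(s-2)+(t-2)=s+t-4$. The power of $u$ is therefore $s+t-3$, and the power of $\log u$ is $s+t-4$. This matches the stated bound with $C_{s,t}:=C_{s,t}$ taken from Theorem~\ref{HeWigderson2020}.

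There is no real obstacle, since the proof is a direct composition of the two results. The only step needing care is the bookkeeping: the constant and the "sufficiently large $u$" threshold must be inherited from Theorem~\ref{HeWigderson2020} without change, and the exponent arithmetic in the three-color specialization must be done correctly.
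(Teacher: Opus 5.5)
Your proposal is correct and is exactly the paper's argument: the paper obtains the corollary by chaining Theorem~\ref{th:ramsey-embedding} (applied with the last parameter equal to $u$) with the Ramsey bound of Theorem~\ref{HeWigderson2020}, keeping its constant and its ``sufficiently large $u$'' threshold. Your exponent bookkeeping for $r=3$, which gives $u^{s+t-3}/(\log u)^{s+t-4}$, also matches the paper.
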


\section{Computer-assisted exact values on finite ranges}

The exact values below were established by means of an exhaustive computer search, as shown in Table \ref{table-3}.

\begin{table}[!htbp]
\footnotesize
\caption{Some exact values of $S(3;s,t,u)$.}
\centering
\begin{tabular}{lc}
\hline
$S(3;s,t,u)$  \\
\hline
\makecell[l]{
$S(3;3,3,8)=59,\ S(3;3,3,9)=68,\ S(3;3,3,10)=77,\ S(3;3,3,11)=86,\ S(3;3,3,12)=94,$\\
$S(3;3,3,13)=104, \ S(3;3,3,14)=113,\ S(3;3,3,15)=122,$\\
$S(3;3,4,8)=67,\ S(3;3,4,9)=78, \ S(3;3,4,10)=86,\ S(3;3,4,11)=98,\ S(3;3,4,12)=106,$ \\
$S(3;3,5,8)=91,\ S(3;3,5,9)=103, \ S(3;3,5,10)=115,$\\
$S(3;4,4,8)=87,\ S(3;4,4,9)=98,\ S(3;4,4,10)=109,$\\
$S(3;4,5,8)=111,$\\
$S(3;4,6,7)=118$.\\
} \\
\hline
\end{tabular}
\label{table-3}
\end{table}

For a fixed integer $N$, we encode the corresponding coloring problem on $[1,N]$ as a Boolean satisfiability instance.
For each integer $i \in [1,N]$ and each color $c \in \{1,2,3\}$, a Boolean variable $v_{i,c}$ is introduced to indicate whether $i$ is assigned color $c$.
The encoding consists of two parts: the exactly-one-color constraints and the constraints forbidding monochromatic solutions of type $L(r)$ in each color class.
The overall decision procedure is presented in Algorithm~\ref{alg:solve_s3_instance},
the clause generation routine for monochromatic configurations is provided in Algorithm~\ref{alg:add_no_mono_constraints},
and the interval search strategy for computing $S(3;s,t,u)$ is outlined in Algorithm~\ref{alg:search_range}; see the Appendix.

\section{Appendix}

\begin{algorithm}[htbp]\scriptsize
\caption{SAT-based decision procedure for a single instance $(N,s,t,u)$}
\label{alg:solve_s3_instance}
\begin{algorithmic}[1]
\Require Positive integers $N,s,t,u$
\Ensure Whether there exists a $3$-coloring of $[1,N]$ such that color $1$ avoids $L(s)$, color $2$ avoids $L(t)$, and color $3$ avoids $L(u)$

\State Initialize a SAT solver $\mathcal{S}$

\For{$i \gets 1$ to $N$}
    \State Introduce Boolean variables $v_{i,1}, v_{i,2}, v_{i,3}$
    \State Add the exactly-one-color constraints for $i$
\EndFor

\State $cnt_s \gets \Call{AddNoMonochromaticLConstraints}{S, s, N, 1}$
\State $cnt_t \gets \Call{AddNoMonochromaticLConstraints}{S, t, N, 2}$
\State $cnt_u \gets \Call{AddNoMonochromaticLConstraints}{S, u, N, 3}$
\State $isSat \gets \Call{Solve}{S}$

\If{$isSat$}
    \State Extract a satisfying assignment from $\mathcal{S}$
    \State Decode it into a coloring $\chi:[1,N]\to\{1,2,3\}$
    \State \Return $(\textsc{SAT}, \chi, (cnt_s,cnt_t,cnt_u))$
\Else
    \State \Return $(\textsc{UNSAT}, \varnothing, (cnt_s,cnt_t,cnt_u))$
\EndIf
\end{algorithmic}
\end{algorithm}

\begin{algorithm}[htbp]\scriptsize
\caption{Generate clauses forbidding monochromatic solutions of $L(r)$ in one color}
\label{alg:add_no_mono_constraints}
\begin{algorithmic}[1]
\Require SAT solver $\mathcal{S}$, integers $r,N$, forbidden color $c$
\Ensure CNF clauses forbidding monochromatic solutions of type $L(r)$ in color $c$

\State $totalClauses \gets 0$

\For{$rhs \gets r-1$ to $N$}
    \ForAll{candidate sets $B \subseteq \{1,2,\dots,rhs-1\}$ generated in increasing order}
        \State $k \gets |B|$
        \State $baseSum \gets \sum_{x\in B} x$
        \State $extraSlots \gets (r-1)-k$
        \State $extraSum \gets rhs-baseSum$

        \If{$extraSlots < 0$ \textbf{or} $extraSum < 0$}
            \State \textbf{continue}
        \EndIf

        \If{$\Call{CanFillExactly}{B, extraSlots, extraSum}$}
            \State $nums \gets B \cup \{rhs\}$
            \State Add the clause
            \[
            \bigvee_{x\in nums}\neg v_{x,c}
            \]
            to $\mathcal{S}$
            \State $totalClauses \gets totalClauses + 1$
        \EndIf
    \EndFor
\EndFor

\State \Return $totalClauses$
\end{algorithmic}
\end{algorithm}

\begin{algorithm}[htbp]\scriptsize
\caption{Range search for $S(3;s,t,u)$}
\label{alg:search_range}
\begin{algorithmic}[1]
\Require Positive integers $s,t,u$, and a search interval $[N_{start},N_{end}]$
\Ensure The first unsatisfiable value in the interval, if it exists

\State $firstUnsat \gets \varnothing$

\For{$N \gets N_{start}$ to $N_{end}$}
    \State $(status,\chi,counts) \gets \Call{SolveS3Instance}{N,s,t,u}$
    \If{$status = \textsc{UNSAT}$}
        \State $firstUnsat \gets N$
        \State \textbf{break}
    \EndIf
\EndFor

\If{$firstUnsat = \varnothing$}
    \State \Return ``No unsatisfiable instance found in $[N_{start},N_{end}]$''
\Else
    \State \Return $firstUnsat$
\EndIf
\end{algorithmic}
\end{algorithm}

\end{document}